\newtheorem{theorem}{Theorem}[section]
\newtheorem{lemma}[theorem]{Lemma}
\newtheorem{corollary}[theorem]{Corollary}
\newtheorem*{thmA}{Theorem A}
\theoremstyle{definition}
\theoremstyle{remark}
\newtheorem{remark}[theorem]{Remark}
\numberwithin{equation}{section}
\newcommand{\Aut}{{\operatorname{Aut}}}
\newcommand{\Irr}{{\operatorname{Irr}}}
\newcommand{\PSL}{{\operatorname{PSL}}}
\newcommand{\PGL}{{\operatorname{PGL}}}
\newcommand{\SL}{{\operatorname{SL}}}
\newcommand{\Out}{{\operatorname{Out}}}
\newcommand{\Alt}{{\operatorname{Alt}}}
\renewcommand{\SL}{{\mathrm {SL}}}
\DeclareMathOperator{\Sym}{Sym}
\begin{document}

\title{On groups with square-free gcd of character degree and codegree}

\author{Karam Aldahleh}
\address{Department of Mathematics, University of Rochester, 915 Hylan Building, Rochester, NY 14627, USA.}
\email{kaldahle@u.rochester.edu}

\author{Alan Kappler}
\address{Department of Mathematics, Harvey Mudd College, 301 Platt Blvd, Claremont, CA 91711, USA.}
\email{akappler@g.hmc.edu}

\author{Neil Makur}
\address{Department of Computer Science, Carnegie Mellon University, 5000 Forbes Avenue, Pittsburgh, PA 15213, USA.}
\email{nmakur@andrew.cmu.edu}

\author{Yong Yang}
\address{Department of Mathematics, Texas State University, 601 University Drive, San Marcos, TX 78666, USA.}
\makeatletter
\email{yang@txstate.edu}
\makeatother

\subjclass[2010]{{20C15}}
\date{}


\begin{abstract}
Let $G$ be a finite group and $\chi$ be an irreducible character of $G$. The codegree of $\chi$ is defined as $\chi^c(1) =\frac{|G: \ker\chi|}{\chi(1)}$. In a paper by Gao, Wang, and Chen \cite{GWC24}, it was shown that $G$ cannot satisfy the condition that $\gcd(\chi(1),\chi^c(1))$ is prime for all $\chi\in\Irr(G)^\#$. We generalize this theorem by solving one of Guohua Qian's unsolved problems on character codegrees. In Qian's survey article \cite{Qiancodegree}, he inquires about the structure of non-solvable finite groups with square-free $\gcd$ instead. In particular, we prove that if $G$ is such that $\gcd(\chi(1),\chi^c(1))$ is square-free for every irreducible character $\chi$, then $G/\text{Sol}(G)$ is isomorphic to one among a particular list of almost simple groups. 
\end{abstract}

\maketitle
\Large
\section{Introduction} \label{sec:introduction8}
 Throughout this paper, $G$ is a finite group and $\Irr(G)$ is the set of irreducible characters of $G$. The degrees of these irreducible characters, defined as the dimension of their corresponding representation, have helped establish plenty of meaningful results in group theory. As an analogue to degrees, the concept of codegree was originally defined by Qian, Wang, and Wei in \cite{Qiancodegree}. For a character $\chi\in \Irr(G)$, the codegree of $\chi$ is denoted and defined as $\chi^c(1)=|G:\mathrm{ker}(\chi)|/\chi(1)$. A vast amount of research has shown that codegrees offer a new and powerful group invariant. Part of this research includes studying the divisibility relationship between the degree and codegree of a given irreducible character. In particular, this relationship was originally studied in \cite{Liang} when Liang and Qian defined and characterized $\mathscr{H}$-groups. Gao, Wang, and Chen later established in \cite{GWC24} that no group is such that $\gcd(\chi(1),\chi^c(1))$ is prime for every $\chi\in\Irr(G)^\#$. We now turn to a new problem proposed in Qian's detailed survey on character codegrees ~\cite[Problem 5.7(2)]{qsurvey}.

 We say that a group $G$ satisfies the \textbf{square-free character hypothesis} if for every $\chi\in\Irr(G)$, we have that $\gcd(\chi(1),\chi^c(1))$ is square-free. Qian's survey inquires what the structure of such a group looks like. In particular, he asks for two different descriptions in the solvable and nonsolvable case. In this paper, we resolve the latter.

 \textbf{Problem:} \textit{What is the complete description of a nonsolvable group $G$ satisfying the square-free character hypothesis?}

 Inspired by the aforementioned divisibility results, we establish the following theorem in this paper.

\begin{thmA}
    Let $G$ be a finite non-solvable group which satisfies the square-free character hypothesis. Suppose $M=\operatorname{Sol}(G)$ is the maximal normal solvable subgroup of $G$; then $G/M$ is isomorphic to one of the following:
\begin{itemize}
\item $J_1$
\item $\Alt_n$ for $n=5,6,7$
\item $\Sym_n$ for $n=5,6$
\item $M_{10}$ or $\PGL(2,9)$
\item $^2B_2(8)$ or $\Aut(^2B_2(8))$
\item A group $H$ where $\PSL(q)\le H\le\Aut(\PSL(q))$ as described in Lemma \ref{PSL-exceptions}.
\end{itemize}
\end{thmA}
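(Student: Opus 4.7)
My overall plan is to first reduce to the case where $\bar G := G/M$ is almost simple, and then apply the classification of finite simple groups (CFSG) to restrict the possibilities to the short list in the theorem. The square-free character hypothesis is inherited by quotients: an irreducible character $\bar\chi$ of $\bar G$ inflates to an irreducible character $\chi$ of $G$ with $\chi(1) = \bar\chi(1)$ and $|G:\ker\chi| = |\bar G : \ker\bar\chi|$, so the codegrees agree and $\gcd(\bar\chi(1),\bar\chi^c(1))$ remains square-free. Since $M = \operatorname{Sol}(G)$, the solvable radical of $\bar G$ is trivial, so its socle $N = T_1 \times \cdots \times T_k$ is a direct product of nonabelian simple groups, and $\bar G$ embeds in $\Aut(N)$.

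Next I would show that $k = 1$. Suppose instead $k \ge 2$, and choose a prime $p$ with $p \mid |T_1|$. By Ito--Michler, for each $i$ there exists $\phi_i \in \Irr(T_i)$ with $p \mid \phi_i(1)$. A Clifford-theoretic analysis of the product character $\phi_1 \boxtimes \cdots \boxtimes \phi_k \in \Irr(N)$, together with extension and induction up to $\bar G$, yields an irreducible character $\chi \in \Irr(\bar G)$ with $p^k$ dividing $\chi(1)$, so in particular $p^2 \mid \chi(1)$. Since each $\phi_i$ is faithful on $T_i$, the intersection $\ker\chi \cap N$ is trivial, so $|N|/\chi(1)$ divides $\chi^c(1)$; the large $p$-part of $|N|$ then forces $p^2 \mid \chi^c(1)$, contradicting the hypothesis. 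Hence $\bar G$ is almost simple with socle a nonabelian simple group $T$.

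The remaining step is to classify admissible almost simple groups. For each family in CFSG, I would exhibit, whenever possible, a character of $T$ or of $\bar G$ whose degree and cofactor share a squared prime. For $\Alt_n$ with $n \ge 8$, the hook-length formula produces partitions whose associated character degrees and cofactors in $|\Alt_n|$ are both divisible by $4$, which reduces the alternating analysis to $n \le 7$; direct computation then leaves $\Alt_5,\Alt_6,\Alt_7,\Sym_5,\Sym_6$, with the exceptional extensions of $\Alt_6$ giving the $M_{10}$ and $\PGL(2,9)$ cases. The sporadic groups and their outer extensions can be verified one by one from the ATLAS, leaving only $J_1$. For the Lie type groups, the Steinberg, unipotent, and semisimple character degrees together with the standard order formula rule out every family of untwisted or twisted rank at least two; among rank-one families only $\PSL(2,q)$ and the Suzuki groups ${}^2B_2(q)$ merit a finer analysis, and among the latter only $q = 8$ and its extension $\Aut({}^2B_2(8))$ survive.

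The main obstacle is the detailed treatment of $\PSL(2,q)$ and the choice of admissible extensions $H$ with $\PSL(2,q) \le H \le \Aut(\PSL(2,q))$, where the factorizations of $q$, $q-1$, and $q+1$ and the action of diagonal and field automorphisms interact intricately with the character degree set. This delicate arithmetic is precisely the content of Lemma~\ref{PSL-exceptions}, which I would invoke to extract the exact list of admissible pairs $(q,H)$. Combining this with the short lists obtained in the alternating, sporadic, and remaining Lie type cases assembles the full description of $G/M$ claimed in Theorem A.
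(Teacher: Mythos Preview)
Your overall architecture matches the paper's: reduce to $\bar G$ almost simple, then run through CFSG. The reduction via the socle and the plan for the alternating, sporadic, and Lie-type families are essentially what the paper does, and invoking Lemma~\ref{PSL-exceptions} for the $\PSL_2(q)$ case is correct.

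The gap is in your argument that the socle has a single simple factor. Picking $\phi_i\in\Irr(T_i)$ with $p\mid\phi_i(1)$ for every $i$ does give $p^k\mid\chi(1)$ for any $\chi$ lying over $\phi_1\boxtimes\cdots\boxtimes\phi_k$, but your claim that ``the large $p$-part of $|N|$ then forces $p^2\mid\chi^c(1)$'' is false. Take $N=\Alt_5\times\Alt_5$ and $p=2$: the only even-degree irreducible of $\Alt_5$ has degree $4$, so your $\phi$ has degree $16$, and $\phi^c(1)=3600/16=225$ is odd. The point is that your construction may absorb the entire $p$-part of $|N|$ into the degree, leaving nothing for the codegree. (There is also a minor issue: you choose $p$ dividing $|T_1|$ but apply It\^o--Michler to every $T_i$, which needs $p\mid|T_i|$; taking $p=2$ fixes this but you do not say so.)

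The paper's remedy is to mix parities across the factors rather than use $p$-divisible degrees everywhere: on one factor take a character of degree divisible by $4$ (this exists for every nonabelian simple group except $\Alt_7$, by a result of Lewis), and on another factor take a nontrivial character of \emph{odd} degree (which always exists). The resulting product has degree divisible by $4$, while its codegree picks up the full $2$-part of the second factor's order, which is at least $4$. A short separate argument handles the case where both factors are $\Alt_7$. This asymmetry between the factors is the missing idea in your reduction.
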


\begin{remark}
    Note that $\Alt_5,\Sym_5,\Alt_6,\Sym_6,M_{10},\PGL_2(9)$ are all covered by the last case, being almost simple groups corresponding (sometimes by exceptional isomorphism) to one of the simple groups $\PSL_2(5)$ and $\PSL_2(9).$
\end{remark}

\section{Preliminaries}

\begin{lemma}[\cite{Liang}, Lemma 2.1]
    \label{codegree-divides}
    Let $G$ be a finite group and let $\chi\in\Irr(G).$
    \begin{enumerate}
        \item If $N$ be a $G$-invariant subgroup of $\ker\chi$. Then $\chi$ may be viewed as an irreducible character of $G/N$, and the codegrees of $\chi$ in $G$ and in $G/N$ are the same.
        \item If $M$ is a subnormal subgroup of G and $\psi$ is an irreducible constituent of $\chi_M$, then $\psi^c(1)\mid \chi^c(1).$
    \end{enumerate}
\end{lemma}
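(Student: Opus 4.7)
The plan is to handle the two parts of the lemma separately. For part (1), I would argue directly from definitions: the hypothesis that $N$ is a $G$-invariant (hence normal) subgroup of $\ker\chi$ means $\chi$ factors through the projection $G \to G/N$, giving an irreducible character $\tilde\chi$ of $G/N$ with $\tilde\chi(1) = \chi(1)$ and $\ker\tilde\chi = \ker\chi/N$. A direct computation then yields
\[
\tilde\chi^c(1) \;=\; \frac{|G/N : \ker\chi/N|}{\tilde\chi(1)} \;=\; \frac{|G:\ker\chi|}{\chi(1)} \;=\; \chi^c(1),
\]
which is the claim.

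For part (2), my first step is to reduce to the case where $M$ is normal in $G$. Given a subnormal chain $M = M_0 \triangleleft M_1 \triangleleft \cdots \triangleleft M_n = G$, I would produce an interpolating sequence $\psi = \psi_0, \psi_1, \ldots, \psi_n = \chi$ with $\psi_i \in \Irr(M_i)$ an irreducible constituent of $(\psi_{i+1})_{M_i}$, and chain the divisibilities together from each adjacent normal pair. Next, I would apply part (1) to pass to the quotient by $\ker\chi$: replacing $G$ by $G/\ker\chi$, $M$ by $M\ker\chi/\ker\chi$, and $\psi$ by its natural image (using that $M \cap \ker\chi = \ker(\chi_M)$ is contained in $\ker\psi$, since the kernel of a restriction is the intersection of the kernels of its constituents), I may assume $\chi$ is faithful, so that $\chi^c(1) = |G|/\chi(1)$.

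With $M \triangleleft G$ and $\chi$ faithful, I would invoke Clifford's theorem: $\chi_M = e(\psi_1 + \cdots + \psi_t)$ where the $\psi_i$ are the distinct $G$-conjugates of $\psi$, $t = [G:I_G(\psi)]$, and $\chi(1) = et\psi(1)$. The divisibility $\psi^c(1) \mid \chi^c(1)$ then unwinds to $et \cdot |M:\ker\psi| \mid |G|$, and since $|G| = t \cdot |I_G(\psi)|$, it suffices to show $e \cdot |M:\ker\psi| \mid |I_G(\psi)|$. The Clifford correspondent $\varphi \in \Irr(I_G(\psi) \mid \psi)$ satisfies $\varphi(1) = e\psi(1)$, and the standard fact (Gallagher; see Isaacs, \emph{Character Theory of Finite Groups}) that $\varphi(1)/\psi(1)$ divides $[I_G(\psi):M]$ gives $e|M| \mid |I_G(\psi)|$; since $|\ker\psi|$ divides $|M|$, this yields $e|M:\ker\psi| \mid |I_G(\psi)|$, as required. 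The main obstacle is precisely this Clifford-theoretic divisibility $e \mid [I_G(\psi):M]$; everything else is routine bookkeeping along the subnormal chain and across the quotient by $\ker\chi$.
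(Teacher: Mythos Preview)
The paper does not supply its own proof of this lemma: it is simply quoted from \cite{Liang} (Lemma~2.1 there) and used as a black box. So there is nothing in the present paper to compare your argument against.

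That said, your proposal is a correct proof. Part~(1) is an immediate computation from the definition and the third isomorphism theorem, exactly as you wrote. For part~(2), your reduction along a subnormal chain works: given $\psi\in\Irr(M_0)$ under $\chi\in\Irr(M_n)$, one builds the interpolating characters by choosing, for each $i$, an irreducible constituent $\psi_{i+1}$ of $\chi_{M_{i+1}}$ lying over $\psi_i$ (such a constituent exists because $\psi_i$ appears in $\chi_{M_i}=(\chi_{M_{i+1}})_{M_i}$). The passage to $G/\ker\chi$ is legitimate since $M\cap\ker\chi=\ker(\chi_M)\subseteq\ker\psi$, so both codegrees are unchanged by part~(1). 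In the faithful normal case, your Clifford computation is right: with $\chi(1)=et\psi(1)$ and $|G|=t\,|I_G(\psi)|$, the desired divisibility reduces to $e\,|M:\ker\psi|\mid |I_G(\psi)|$, and the key input is that $e=\varphi(1)/\psi(1)$ divides $[I_G(\psi):M]$. One small correction: this last fact is not Gallagher's theorem but rather the standard consequence of projective representations recorded as Corollary~11.29 in Isaacs' \emph{Character Theory of Finite Groups}; Gallagher's theorem concerns the situation where $\psi$ extends. With that relabeling, the argument is complete.
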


\begin{theorem}[Clifford's Theorem]
    \label{clifford}
    Let $\pi: G\to GL(n,K)$ be an irreducible representation over a field $K$. If $N\lhd G$, then $\pi|_N=\bigoplus_{i=1}^t\rho_i$, where each $\rho_i$ is an irreducible representation of $N$ of the same dimension. This, of course, implies the existence of some irreducible representation $\rho$ of dimension $\frac{n}{t}$.\\
    In terms of characters, if $\mu$ is a complex character of $N$, then for a fixed $g\in G$, $\mu^{(g)}$ is irreducible if and only if $\mu$ is irreducible. If $\chi\in\Irr(G)$ and $\mu\in\Irr(N)$, then when $\langle\chi|_N,\mu\rangle\neq 0$, $\chi_N=e\sum_{i=1}^t\mu^{(g_i)}$ for some $e,t\in\mathbb{Z}^+$ with $g_i\in G$.
\end{theorem}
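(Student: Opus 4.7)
The plan is to use the $G$-conjugation action on $N$-representations and exploit the fact that $\pi$ is $G$-irreducible to force that action to be transitive on the isotypic pieces of $\pi|_N$.

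First, I would unpack the conjugate representation. For a representation $\rho$ of $N$ and a fixed $g\in G$, define $\rho^{(g)}(n):=\rho(g^{-1}ng)$; since $N\triangleleft G$, this is again a representation of $N$. Any $N$-intertwiner $T\colon\rho\to\rho'$ may itself be conjugated to produce an $N$-intertwiner $\rho^{(g)}\to(\rho')^{(g)}$, so $\rho\mapsto\rho^{(g)}$ is an equivalence of the category of $N$-representations; in particular it preserves irreducibility, which yields the claim that $\mu^{(g)}\in\Irr(N)$ if and only if $\mu\in\Irr(N)$.

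Next, I would work in the representation space $V=K^n$ and decompose $V|_N$ into its $N$-isotypic components $V=W_1\oplus\cdots\oplus W_r$, where $W_i$ collects all irreducible $N$-subrepresentations of a fixed isomorphism type $\rho_i$. This uses semisimplicity of $\pi|_N$, which holds under Maschke's theorem and in particular in the complex setting relevant to the paper. The key observation is that for each $g\in G$ the subspace $\pi(g)W_i$ is $N$-invariant, and the $N$-action on $\pi(g)W_i$ is isomorphic to $\rho_i^{(g^{-1})}$; hence $\pi(g)W_i=W_j$ where $W_j$ is the isotypic piece for $\rho_i^{(g^{-1})}$. Thus $G$ permutes the set $\{W_1,\ldots,W_r\}$.

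The heart of the argument, and the only real step, is the transitivity claim. For any $G$-orbit $\mathcal{O}\subseteq\{W_1,\ldots,W_r\}$, the sum $\sum_{W\in\mathcal{O}}W$ is simultaneously $N$-invariant (as a sum of $N$-invariant subspaces) and $G$-invariant (by definition of an orbit). Since $\pi$ is $G$-irreducible, this sum must be all of $V$; hence $G$ acts transitively on the isotypic pieces. Consequently the $\rho_i$ are pairwise $G$-conjugate, so they share a common dimension (yielding the first displayed assertion and the existence of an irreducible $\rho$ of dimension $n/t$), and they appear in $\pi|_N$ with a common multiplicity $e$. Writing the $G$-orbit of any particular constituent $\mu$ as $\mu^{(g_1)},\ldots,\mu^{(g_t)}$ then gives $\chi_N=e\sum_{i=1}^t\mu^{(g_i)}$, as required. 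The main obstacle, such as it is, lies in justifying semisimplicity of $\pi|_N$ over the field $K$; in the complex-character context used throughout the rest of the paper this is automatic, but in a genuinely general field $K$ one must either restrict to the case where $\mathrm{char}(K)\nmid |G|$ or interpret the decomposition as a composition series.
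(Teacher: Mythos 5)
The paper states Clifford's Theorem as a classical result and supplies no proof of its own, so there is nothing to compare against line by line; your argument is the standard textbook proof (isotypic decomposition of $V|_N$, the $G$-permutation of the isotypic pieces, and transitivity forced by $G$-irreducibility), and it is essentially correct. The one point where it falls short of the statement as written is the one you flag yourself: semisimplicity of $\pi|_N$. The theorem is asserted over an arbitrary field $K$, and invoking Maschke (or restricting to the complex case) does not cover that generality. The good news is that no hypothesis on $\operatorname{char}(K)$ is needed: semisimplicity of $\pi|_N$ is itself a consequence of the $G$-irreducibility of $\pi$. Take any irreducible $N$-submodule $W\le V$ (one exists since $V$ is finite-dimensional). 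For each $g\in G$ the subspace $\pi(g)W$ is $N$-stable (because $N\lhd G$) and is $N$-irreducible, carrying a conjugate of the representation on $W$. Then $\sum_{g\in G}\pi(g)W$ is a nonzero $G$-invariant subspace, hence equals $V$; so $V$ is a sum of irreducible $N$-modules and is therefore a semisimple $N$-module. With that replacement your isotypic decomposition, and everything downstream of it, goes through over any field. (A second, purely cosmetic remark: with the convention $\rho^{(g)}(n)=\rho(g^{-1}ng)$, the $N$-module $\pi(g)W_i$ is isomorphic to $\rho_i^{(g)}$ rather than $\rho_i^{(g^{-1})}$; this is a labelling choice and does not affect the transitivity argument.)
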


\begin{corollary}
    Let $N\unlhd G,$ and let $\psi$ be an irreducible character of $N$; then there exists an irreducible character $\chi$ of $G$ such that $\psi(1)\mid \chi(1),$ $\psi^c(1)\mid \chi^c(1),$ and $\psi$ is an irreducible component of $\chi_N.$
\end{corollary}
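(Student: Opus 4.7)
The plan is to produce $\chi$ by Frobenius reciprocity, then invoke the two quoted results to verify the three required properties. Specifically, I would let $\chi \in \Irr(G)$ be any irreducible constituent of the induced character $\psi^G$. By Frobenius reciprocity, $\langle \psi^G, \chi \rangle_G = \langle \psi, \chi_N \rangle_N \neq 0$, so $\psi$ is an irreducible constituent of $\chi_N$. This already handles the third conclusion and is the choice around which everything else is built.

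For the degree divisibility $\psi(1) \mid \chi(1)$, I would apply Clifford's Theorem (Theorem \ref{clifford}) to $\chi$ with respect to the normal subgroup $N$. Since $\psi$ is a constituent of $\chi_N$, Clifford gives
\[
\chi_N = e \sum_{i=1}^{t} \psi^{(g_i)}
\]
for some ramification index $e \in \mathbb{Z}^+$ and some $G$-conjugates $\psi^{(g_i)}$ of $\psi$. Each conjugate has the same degree $\psi(1)$, so evaluating at the identity yields $\chi(1) = e t \, \psi(1)$, and therefore $\psi(1) \mid \chi(1)$.

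For the codegree divisibility $\psi^c(1) \mid \chi^c(1)$, I would use the hypothesis that $N \unlhd G$ (so in particular $N$ is subnormal in $G$) together with the fact, just established, that $\psi$ is an irreducible constituent of $\chi_N$. Part (2) of Lemma \ref{codegree-divides} then directly gives $\psi^c(1) \mid \chi^c(1)$, completing the proof.

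There is no real obstacle here: the corollary is a packaging of Clifford's theorem with the subnormal codegree-divisibility statement in Lemma \ref{codegree-divides}(2), and the only nontrivial step is choosing $\chi$ to lie above $\psi$ via induction. The corollary is stated because it will be invoked repeatedly later to lift divisibility constraints from normal subgroups to the ambient group, which is what makes the square-free character hypothesis descend to quotients and normal subgroups.
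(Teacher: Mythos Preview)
Your proposal is correct and matches the paper's intended argument: the corollary is stated without an explicit proof block, but the paper's main theorem proof spells out exactly this reasoning---pick $\chi$ lying over $\psi$, use Clifford's theorem for $\psi(1)\mid\chi(1)$, and invoke Lemma~\ref{codegree-divides}(2) for $\psi^c(1)\mid\chi^c(1)$. Your use of Frobenius reciprocity to produce $\chi$ makes the existence step explicit, which the paper leaves implicit.
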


\begin{theorem}[Mattarei's Lemma] Let $G$ be a finite group containing no nontrivial solvable normal subgroups. Then there are nonabelian simple groups $T_1,\ldots,T_r$ and $n_1,\ldots,n_r\geq 1$ such that $G$ contains an isomorphic copy of $T_1^{n_1}\times\cdots\times T_r^{n_r}$ and is isomorphic to a subgroup of $\Aut(T_1^{n_1}\times\cdots\times T_r^{n_r})$.
\end{theorem}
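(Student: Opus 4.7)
The plan is to analyze the socle $N := \operatorname{Soc}(G)$ (the product of all minimal normal subgroups of $G$) and exploit the classical principle that in a group with no nontrivial solvable normal subgroups the socle is self-centralizing.

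First, I would recall that any minimal normal subgroup $M$ of $G$ is characteristically simple, hence isomorphic to a direct power of some simple group $S$. If $S$ were abelian, $M$ would be an elementary abelian $p$-group, yielding a nontrivial solvable normal subgroup of $G$ and contradicting the hypothesis. Therefore every minimal normal subgroup of $G$ is a direct product of isomorphic non-abelian simple groups. Since distinct minimal normal subgroups intersect trivially (by minimality), the socle $N$ decomposes as an internal direct product of non-abelian simple groups, and collecting the factors by isomorphism type yields $N \cong T_1^{n_1}\times\cdots\times T_r^{n_r}$ for non-abelian simple $T_i$ and integers $n_i\geq 1$. This supplies the required subgroup of $G$.

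Next, I would show that $C := C_G(N)$ is trivial. Since each $T_i$ is non-abelian, $Z(N) = 1$, so $C\cap N = 1$. But $C\trianglelefteq G$, and if $C$ were nontrivial it would contain a minimal normal subgroup $L$ of $G$; by definition of the socle, $L\leq N$, forcing $L\leq C\cap N = 1$, a contradiction. Hence $C = 1$, and the conjugation action of $G$ on $N$ yields an injective homomorphism $G\hookrightarrow \Aut(N) = \Aut(T_1^{n_1}\times\cdots\times T_r^{n_r})$, which finishes the argument. The only non-routine ingredients are the two classical facts that a characteristically simple group is a direct product of isomorphic simple groups and that every nontrivial normal subgroup of $G$ meets the socle nontrivially; neither requires further machinery, so the main conceptual step is simply the self-centralizing-socle observation, and I expect no serious obstacle.
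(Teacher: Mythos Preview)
Your proposal is correct and follows essentially the same route as the paper: both take $N$ to be the product of the minimal normal subgroups, identify each minimal normal subgroup as a direct power of a nonabelian simple group, argue that $N$ is an internal direct product, and then show $C_G(N)=1$ via the observation that a nontrivial centralizer would contain a minimal normal subgroup and hence meet $N$. The only cosmetic differences are that the paper keeps one $T_i$ per minimal normal subgroup rather than collecting by isomorphism type, and it spells out the direct-product step via the inclusion $N_i\cap\prod_{j\neq i}N_j\le Z(N_i)=1$, whereas you invoke it as a standard fact about the socle.
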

\begin{proof} Let $N_1,\ldots,N_r$ be the minimal normal subgroups of $G$, and let $N=N_1\cdots N_r$. We may write $N_i\cong T_i^{n_i}$ for some simple group $T_i$, and note that $T_i$ must be nonabelian since $N_i$ is not solvable. In particular, $Z(N_i)=1$. Now, for $i\neq j$, $N_i\cap N_j=1$, and so elements of $N_i$ and $N_j$ commute, meaning that elements of $N_i$ and $\prod_{j\neq i}N_j$ commute. Thus, $N_i\cap\prod_{j\neq i}N_j\leq Z(N_i)=1$, so that $N$ is the internal direct product of the $N_i$: $N\cong N_1\times\cdots\times N_r$.

Now, $N\unlhd G$, and so we have a homomorphism $G\to\Aut(N)$ with kernel $Z_G(N)$, where $g$ maps to $\varphi_g:n\mapsto gng^{-1}$. Since elements of $Z_G(N)$ commute with elements of $N$,
$$Z_G(N)\cap N=Z(N)\cong Z(N_1)\times\cdots\times Z(N_r)=1.$$
Note that $Z_G(N)$ is normal in $G$, as it is a kernel of a homomorphism. Thus, if $Z_G(N)\neq 1$, then it would contain a minimal normal subgroup, and so we would have $Z_G(N)\cap N\neq 1$, an impossibility. This means, $Z_G(N)=1$, and so we can embed $G$ into $\Aut(N)$.

Finally, since each $N_i$ is normal in $G$, it is fixed by the action of $G$, so that the image of $G$ lies in
$$\Aut(N_1)\times\cdots\times\Aut(N_r)\leq\Aut(T_1^{n_1}\times\cdots\times T_r^{n_r}),$$
as desired.\end{proof}

\begin{theorem}[\cite{FRT}, Theorem 1]
    If $\lambda=(\lambda_1,...,\lambda_k)$ is a finite sequence of positive integers s.t. $\lambda_1\geq\lambda_2\geq...\geq\lambda_k$, then for $n=\lambda_1+...+\lambda_k$, there exists an irreducible character $\chi_\lambda$ of $S_n$ with degree $\frac{n!}{H(\lambda)}$. Furthermore, if $\lambda\neq\overline{\lambda}$, then $\chi_\lambda$ restricted to $A_n$ is again irreducible and of the same degree. Here $H(\lambda)$ denotes the hook-length product and $\overline{\lambda}$ denotes the conjugate partition of $\lambda$.
\end{theorem}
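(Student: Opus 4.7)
The plan is to split the theorem into its two independent assertions and handle each via a classical route.

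For the hook-length degree formula, I would invoke the original Frame–Robinson–Thrall derivation. The starting point is the Frobenius character formula, which expresses $\chi_\lambda(1)$ as a ratio of the determinant $\det(\ell_j^{k-i})$, where $\ell_i = \lambda_i + k - i$, against the Vandermonde determinant in the $\ell_i$. Expanding the Vandermonde and rewriting the product over pairs $(i,j)$ in terms of the hook-lengths of the cells of the Young diagram of $\lambda$ yields the identity
\[
\chi_\lambda(1) \;=\; \frac{n!}{\prod_{(i,j)\in\lambda}h(i,j)} \;=\; \frac{n!}{H(\lambda)}.
\]
Since this is a well-known classical identity, I would simply reference \cite{FRT} for the bookkeeping rather than reproducing the algebraic manipulation.

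For the second assertion about restriction to $\Alt_n$, the natural tool is Clifford theory applied to the normal subgroup $\Alt_n\trianglelefteq\Sym_n$ of index $2$. Since $\Sym_n/\Alt_n\cong\bZ/2\bZ$, the only nontrivial linear character of $\Sym_n$ with $\Alt_n$ in its kernel is the sign character $\operatorname{sgn}$, and by Gallian's theorem (or a direct check via the Murnaghan–Nakayama rule) one has
\[
\chi_\lambda\otimes\operatorname{sgn} \;=\; \chi_{\overline{\lambda}}.
\]
By Clifford's Theorem (Theorem~\ref{clifford}), $(\chi_\lambda)_{\Alt_n}$ is either irreducible or a sum of two distinct $\Sym_n$-conjugate irreducible characters of $\Alt_n$ of equal degree. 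The standard criterion is that $(\chi_\lambda)_{\Alt_n}$ is reducible if and only if $\chi_\lambda$ is fixed by tensoring with $\operatorname{sgn}$, i.e.\ if and only if $\chi_\lambda = \chi_{\overline\lambda}$. Because distinct partitions parametrize distinct irreducible characters of $\Sym_n$, this equality forces $\lambda=\overline\lambda$. Contrapositively, if $\lambda\neq\overline\lambda$, then $(\chi_\lambda)_{\Alt_n}$ is irreducible, and its degree is of course the same as $\chi_\lambda(1)$ since restriction preserves degrees.

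The only mildly subtle ingredient is the identification $\chi_\lambda\otimes\operatorname{sgn}=\chi_{\overline\lambda}$; I would either quote it from a standard reference or deduce it by noting that the hook-lengths of $\lambda$ and $\overline\lambda$ coincide (so both sides have the same degree) and then comparing values on a single distinguishing conjugacy class using the Murnaghan–Nakayama rule. Everything else is a direct appeal to Clifford's theorem and the fact that irreducible $\Sym_n$-characters are in bijection with partitions of $n$, so I do not foresee a serious obstacle.
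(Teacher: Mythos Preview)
The paper does not prove this theorem at all: it is stated purely as a citation of \cite{FRT} and used as a black box to feed Lemma~\ref{An-Chars}. So there is no ``paper's own proof'' to compare against; your outline is strictly more than what the paper supplies.

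That said, your sketch is the standard correct argument and would work. A couple of minor remarks. First, ``Gallian's theorem'' is not a recognized name for the identity $\chi_\lambda\otimes\operatorname{sgn}=\chi_{\overline{\lambda}}$; just cite it as a classical fact (it falls out immediately from the construction of the Specht modules or from Murnaghan--Nakayama). Second, your fallback plan of matching degrees and then checking a single conjugacy class is not literally sufficient to pin down an irreducible character of $\Sym_n$, since distinct irreducibles can agree in degree and on one class; you would need the full Murnaghan--Nakayama comparison or the Specht-module argument. Since you already offer ``quote a standard reference'' as the primary option, this is not a real gap, just something to tighten if you ever write it out.
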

\begin{lemma}\label{An-Chars}
    If $n\geq 8$, then $\rm{Irr}(\Alt_n)$ contains characters of degree
    $$\frac{n(n-2)(n-4)}{3},\qquad \frac{(n-1)(n-2)(n-3)}{6},\qquad \frac{n(n-3)}{2}$$ that extend to $\Sym_n$.
\end{lemma}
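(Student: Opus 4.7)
\medskip

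The plan is to apply the Frame--Robinson--Thrall hook length formula to three carefully chosen partitions of $n$, and then use the criterion (also stated in that theorem) that $\chi_\lambda$ restricts irreducibly to $\Alt_n$ whenever $\lambda\neq\overline{\lambda}$. If $\chi_\lambda|_{\Alt_n}$ is irreducible, then it is by construction the restriction of the $\Sym_n$-character $\chi_\lambda$, and hence extends to $\Sym_n$. So the whole task reduces to finding, for each of the three target degrees, a non-self-conjugate partition whose hook length product gives that degree.

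For the three degrees I would use the partitions
\[
\lambda_1=(n-2,2),\qquad \lambda_2=(n-3,1,1,1),\qquad \lambda_3=(n-3,2,1).
\]
A direct hook length computation gives product $2(n-1)(n-2)\cdot(n-4)!$ for $\lambda_1$, product $6n(n-4)!$ for $\lambda_2$, and product $3(n-1)(n-3)\cdot(n-5)!$ for $\lambda_3$. Dividing $n!$ by these products yields $n(n-3)/2$, $(n-1)(n-2)(n-3)/6$, and $n(n-2)(n-4)/3$ respectively, matching the three degrees in the statement.

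To check non-self-conjugacy for $n\geq 8$: the conjugate of $\lambda_1$ is $(2,2,1^{n-4})$, which has $n-2\geq 6$ parts while $\lambda_1$ has only $2$, so $\lambda_1\neq\overline{\lambda_1}$; the conjugate of $\lambda_2$ is $(4,1^{n-4})$, which has $n-3\geq 5$ parts while $\lambda_2$ has $4$; and the conjugate of $\lambda_3$ is $(3,2,1^{n-5})$, whose first part is $3$ while $(\lambda_3)_1=n-3\geq 5$. In each case the criterion $\lambda\neq\overline{\lambda}$ is met.

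The only step that requires any care is the hook length bookkeeping for $\lambda_3=(n-3,2,1)$, where two hook lengths ($n-2$ and $n-4$) are skipped in the first row, so the first-row product telescopes to $(n-1)(n-3)\cdot(n-5)!$ rather than a clean factorial; this is the only place where a miscount could slip in. Everything else is a direct application of the Frame--Robinson--Thrall theorem already cited, followed by assembling the three resulting characters into the conclusion.
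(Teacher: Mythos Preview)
Your proof is correct and follows exactly the same approach as the paper: you use the same three partitions $(n-2,2)$, $(n-3,1,1,1)$, and $(n-3,2,1)$, apply the Frame--Robinson--Thrall hook length formula, and check non-self-conjugacy to conclude that the restrictions to $\Alt_n$ remain irreducible. If anything, you give more detail than the paper's own proof, which simply asserts the degree values and non-self-conjugacy without displaying the hook products.
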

\begin{proof}
    Let $n\geq 8$ be arbitrary. If $\lambda=(n-3,2,1)$, notice how $\lambda$ is not self-conjugate. By Theorem 3.1, we find that $\chi_\lambda(1)=\frac{n(n-2)(n-4)}{3}$. If we alternatively let $\lambda=(n-3,1,1,1)$, then we similarly get a character corresponding to $\lambda$ with $\chi_\lambda(1)=\frac{(n-1)(n-2)(n-3)}{6}$. Lastly, let $\lambda=(n-2,2)$. It follows that $\chi_\lambda(1)=\frac{n(n-3)}{2}.$
\end{proof}

\begin{theorem}[\cite{White}, Theorem A]
    Let $S = \PSL_2(q)$, where $q=p^f>3$ for a prime $p$, $A = \Aut(S),$ and let $S\le H\le A.$ $A$ is generated by $S,$ a diagonal automorphism $\delta$ of order $2$ if $p$ is odd and $1$ otherwise, and a field automorphism $\varphi$ of order $f.$
    Set $G=\PGL_2(q)$ if $\delta\in H$ and $G = S$ if $\delta\not\in H$, and let $|H:G|=d=2^a m$, $m$ odd. If $m$ is odd, let $\varepsilon=(-1)^{(q-1)/2}$. The set of irreducible character degrees of $H$ is
    \[\operatorname{cd}(H)=\{1,q,(q+\varepsilon)/2\}\cup\{(q-1)2^a\ell:\ell\mid m\}\cup\{(q+1)j:j\mid d\},\]
    with the exceptions:
    \begin{enumerate}
        \item $(q+\varepsilon)/2$ is not a degree if $p=2$ or if $p$ is odd with $H\not\le S\langle\varphi\rangle.$
        \item $\ell\neq 1$ if $p=3,$ $f$ is odd, and $H=S\langle\varphi\rangle.$
        \item $j\neq 1$ if $p=3,$ $f$ is odd, and $H=A.$
        \item $j\neq 1$ if $p=2,3,$ or $5,$ $f$ is odd, and $H=S\langle\varphi\rangle.$
        \item $j\neq 2$ if $p=2$ or $3,$ $f$ is 2 mod 4, and $H=S\langle\varphi\rangle$ or $H=S\langle\delta\varphi\rangle.$
    \end{enumerate}
\end{theorem}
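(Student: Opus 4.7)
The plan is to combine the classical character tables of $S = \PSL_2(q)$ and $\PGL_2(q)$ with Clifford theory applied to the cyclic quotient $H/G$, where $G \in \{S, \PGL_2(q)\}$ is selected according to whether $\delta \in H$. Since $H/G$ is a cyclic subgroup of $\langle \varphi \rangle$ of order $d$, Clifford theory reduces the computation of $\operatorname{cd}(H)$ to understanding, for each $\chi \in \Irr(G)$, its stabilizer in $H/G$ under the field-automorphism action.

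First I would record the irreducible character degrees of $G$. For $\PSL_2(q)$ (the Jordan/Schur calculation) these are: $1$, the Steinberg character of degree $q$, a pair of half-characters of degree $(q\pm\varepsilon)/2$ coming from the unique ramified character of the split torus (present only for $q$ odd), principal series of degree $q+1$ indexed by pairs $\{\theta,\theta^{-1}\}$ of nontrivial characters of the split torus of order $q-1$, and discrete series of degree $q-1$ indexed by pairs of regular characters of the nonsplit torus of order $q+1$. Passing to $\PGL_2(q)$ fuses the two half-characters into a single character and enlarges the parameter sets for the two series accordingly.

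Second, I would analyze the action of $\varphi$ on these characters: it acts on principal and discrete series by $\theta\mapsto\theta^p$ on parameters, while fixing $1$, $q$, and the half-characters setwise; the diagonal automorphism $\delta$ swaps the two halves. The $\varphi$-stabilizer of a principal series character has index equal to the Galois orbit size of its parameter, and these orbit sizes divide $f$. Applying Clifford theory to the cyclic quotient $H/G$ then gives, from induction and extension, degrees of the shape $(q+1)j$ with $j\mid d$ from the principal series, and $(q-1)2^a\ell$ with $\ell\mid m$ from the discrete series, the extra $2^a$-factor on the discrete side reflecting the asymmetric interaction of $\delta$ with the split vs.\ nonsplit tori.

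The main obstacle will be resolving the exceptions (1)--(5), which arise from small-prime coincidences in the Galois orbit structure. For instance, when $p=3$ and $f$ is odd, certain orbits on the nonsplit-torus parameters collapse because $-1$ becomes a small power of $3$ modulo $q+1$, forcing $\ell \neq 1$; likewise for $p\in\{2,3,5\}$ and $f$ odd, small divisors of $q-1$ and $q+1$ produce unexpected fusions in the principal series. Each exception requires an explicit check using formulae for the characters evaluated on $\delta$-fixed and $\varphi$-fixed conjugacy classes. The framework (character table plus Clifford theory applied to a cyclic extension) is entirely standard; the delicate bookkeeping of Galois orbit sizes in the small-field regime is where the real work lies, and I expect that to dominate the length of the proof.
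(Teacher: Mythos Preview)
The paper does not prove this statement at all: it is quoted verbatim as Theorem~A of \cite{White} in the Preliminaries section and used as a black box in the proof of Lemma~\ref{PSL-exceptions}. There is therefore no ``paper's own proof'' to compare against.

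Your sketch is a reasonable outline of how White's theorem is actually proved---the combination of the classical character tables of $\PSL_2(q)$ and $\PGL_2(q)$ with Clifford theory over the cyclic field-automorphism quotient is indeed the standard approach, and you have correctly identified that the exceptions arise from small-prime coincidences in Galois orbit sizes. But since the present paper treats this result as an import, your proposal is not so much an alternative proof as a plan to reprove a cited lemma that the authors never intended to justify themselves.
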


\begin{lemma}
    \label{PSL-exceptions}
    With all groups and variables as above, a subgroup $S\le H\le \Aut(S)$ has the property that $\gcd(\chi(1),\chi^c(1))$ is square-free for all $\chi\in\Irr(G)$ as long as all of the following conditions hold:
    \begin{enumerate}
        \item If $p\neq 2,$ then $p^2\nmid d$;
        \item For any odd prime $r\neq p,$ either $r^2\nmid d$ or $r^4\nmid d(q^2-1)$;
        \item $4\nmid d$ in all cases, and in particular $2\nmid d$ whenever $q\equiv 1 (\operatorname{mod}\ 8),$ $q\equiv 7 (\operatorname{mod}\ 8),$ or $q$ is odd and $\delta\in H$.
    \end{enumerate}
    These conditions are also necessary except in the two groups $\PSL_2(9)\le M_{10},S_6\le\Aut(\PSL_2(9)).$
\end{lemma}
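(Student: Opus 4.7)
The plan is to reduce the square-free hypothesis to a prime-by-prime valuation inequality on $\operatorname{cd}(H)$ and then to extract the conditions directly from White's theorem. First, I would observe that every non-linear $\chi\in\Irr(H)$ is faithful: because $S=\PSL_2(q)$ is simple non-abelian and the embedding $H\hookrightarrow\Aut(S)$ forces $C_H(S)=1$, every non-trivial normal subgroup of $H$ contains $S$, so any non-faithful $\chi$ factors through the abelian quotient $H/S$ and is therefore linear. Consequently, for every non-linear $\chi$ we have $\chi^c(1)=|H|/\chi(1)$, and the square-free hypothesis is equivalent to
\[\min\bigl(v_r(\chi(1)),\,v_r(|H|)-v_r(\chi(1))\bigr)\le 1\]
for every prime $r$ and every $\chi(1)\in\operatorname{cd}(H)\setminus\{1\}$, where $v_r$ denotes $r$-adic valuation.

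Next, plugging in the four families of degrees from White's theorem with $|H|=d\,|G|$ and $d=2^a m$ ($m$ odd), I would analyze prime by prime. The Steinberg degree $q=p^f$ satisfies $v_p(\chi(1))=f$ and $v_p(|H|/\chi(1))=v_p(d)$, so the condition at $r=p$ is $v_p(d)\le 1$, which for odd $p$ is condition (1) and for $p=2$ becomes part of condition (3). For an odd prime $r\ne p$, the degrees $(q-1)2^a\ell$ and $(q+1)j$ can absorb the full $r$-part of $d$ through $\ell\mid m$ or $j\mid d$; requiring $v_r(\chi(1))\ge 2$ and $v_r(|H|/\chi(1))\ge 2$ simultaneously is equivalent to $r^2\mid d$ and $r^4\mid d(q^2-1)$, which is condition (2). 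For $r=2$ with $q$ odd, using $v_2(|S|)=\max(v_2(q-1),v_2(q+1))$ and $v_2(|\PGL_2(q)|)=v_2(q-1)+v_2(q+1)$, I would split by $q\bmod 8$ and by whether $\delta\in H$; plugging the $2$-adic valuations of $(q-1)2^a\ell$, $(q+1)j$, and $(q+\varepsilon)/2$ into the displayed inequality yields exactly the clauses of condition (3). Necessity in each case is witnessed by exhibiting the specific degree and parameter ($\ell$ or $j$) that breaks square-freeness.

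Finally, the pair $\PSL_2(9)\le M_{10}$ and $S_6\le\Aut(\PSL_2(9))$ must be treated by hand: here $q=9$, $p=3$, $f=2\equiv 2\pmod 4$, and $d=2$, and the generic analysis above would fail via the degree $(q+1)\cdot 2=20$ corresponding to $j=2$; however, $M_{10}$ and $S_6$ are precisely the two index-$2$ subgroups $S\langle\varphi\rangle$ and $S\langle\delta\varphi\rangle$ of $\Aut(\PSL_2(9))$ (other than $\PGL_2(9)$), for which White's exception (5) removes $j=2$, and direct inspection of their character tables confirms the square-free property. The main obstacle I expect is the $r=2$ bookkeeping: each sub-clause of condition (3) arises from a tight interplay between $v_2(q\pm 1)$, $v_2(d)$, and the admissible $v_2(\ell), v_2(j)$, and the $q=9$ degeneracies must be cross-referenced against every White exception so that the \emph{except} clause in the lemma really is exactly the two stated groups and not accidentally more.
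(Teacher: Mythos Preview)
Your proposal is correct and follows essentially the same route as the paper: reduce to $\chi^c(1)=|H|/\chi(1)$ for non-linear characters via the unique minimal normal subgroup $S$, then run a prime-by-prime valuation analysis on White's degree list, separating the defining prime $p$, odd primes $r\neq p$, and the prime $2$ (the latter via the $q\bmod 8$ and $\delta\in H$ casework), and finally isolate the $q=9$, $d=2$ anomaly through White's exception (5). The paper carries out exactly this plan, only with the casework written out explicitly rather than packaged in the $\min(v_r(\chi(1)),v_r(|H|)-v_r(\chi(1)))\le 1$ inequality.
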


\begin{proof}
    The value $\gcd(\chi(1),\chi^c(1))$ is square-free if and only if for every prime $p,$ $p^2$ does not divide both $\chi(1)$ and $\chi^c(1).$ Note also that since $\Out(\PSL_2(q))$ is abelian and $S$ is the unique minimal normal subgroup of $H,$ any representation is either linear or satisfies $\chi^c(1)=|G|/\chi(1).$ From here we proceed by casework.

    For an odd prime power $q=p^f,$ the character degree $q$ and its corresponding codegree have $\gcd$ a multiple of $p^2$ only when $\frac{|G|}{q},$ equal to $(q^2-1)d$ up to powers of 2, is a multiple of $p^2.$ $q,q-1,q+1$ have no pairwise common factors except for possibly 2, and in particular this condition requires $p^2\mid d.$ All other characters similarly have $p^2$ never dividing the $\gcd$ or dividing it only when $p^2\mid d.$ Conversely, the degree $q$ always works when $p^2\mid d$; for $\PSL_2(q)$ only has an extension of index $p^2$ when $q$ has more than one factor of $p,$ implying $p^2\mid\gcd(q,|G|/q).$

    For an odd prime $r\neq p,$ $r^2$ dividing the degree and codegree implies that $r^4\mid |G|,$ or (since $q$ and any factors of $2$ are relatively prime to $r^4$) that $r^4\mid (q^2-1)d.$ Furthermore, if $r^2\nmid d,$ then the remaining of $r$ must lie in $q-1$ and $q+1.$ Since these two cannot share a factor of $r,$ all but at most one factor of $r$ lies in either the degree or the codegree: for in every possible case, the character degree has all the factors of $r$ in $q-1$ but none in $q+1,$ all the factors in $q+1$ but none in $q-1,$ or neither. In all these cases, the degree or codegree without the factors of $r$ associated to $q^2-1$ will have at most one factor of $r$ in it, so that $r^2$ does not divide one of the two sides.

    Conversely, when $r^2\mid d$ and $r^4\mid (q^2-1)d,$ consider the character degrees $(q-1)2^ar^2$ and $(q+1)r^2$; these are valid since $\ell,j=1,j=2$ cannot equal $q^2.$ Since $q-1$ and $q+1$ share no factors of $r,$ one of $q-1$ and $q+1$ is not a multiple of $r.$ It follows that $r^2$ exactly divides one of the given degrees, and so the corresponding codegree is also a multiple of $r^4/r^2=r^2$. Thus their $\gcd$ is a multiple of $r^2.$

    For $q$ a power of 2, the trivial $\delta$ is in $H$ in all cases, and a degree of $q$ yields codegree $(q^2-1)d$; similarly to above, the gcd here can only be even when $4\mid d,$ as with all other degrees, and since $q>3,$ $\gcd(q,|G|/q)$ is a multiple of $4.$

    For $q$ an odd prime power, significant casework is required covering the value of $p$ mod $8$ and whether $\delta\in H.$ When $\delta\in H,$ $|G|=q(q^2-1)d,$ while when $\delta\not\in H, |G|=q(q^2-1)d/2.$ The degrees of $1,q,(q+\varepsilon)/2$ are always odd and therefore irrelevant. A case-by-case study of the possibilities for 2 gives the $2\nmid d$ and $4\nmid d$ restrictions listed above. Most of the exceptions have no effect on our square-free case, generally coinciding with already-disallowed values for $d$. Only the last one has an effect: when $p=3,f=2,j=2,d=2,$ the groups $H=S\langle\varphi\rangle$ and $H=S\langle\delta\varphi\rangle$ have no $j=2$ character, corresponding to the extensions $M_{10}$ and $S_6$ of $\PSL_2(9).$
    \begin{enumerate}
        \item When $q \equiv 1\,(\textrm{mod}\ 8)$, the codegrees corresponding to $(q-1)2^a\ell,$ namely $q(q+1)\cdot m/\ell$ or $q(q+1)/2\cdot m/\ell,$ cannot be multiples of 4. $(q+1)j$ is a multiple of $4$ if and only if $j$ is even, and the corresponding codegrees $q(q-1)\cdot d/j$ and $q(q-1)/2\cdot d/j$ are multiples of 4. Thus we have a $\gcd$ which is a multiple of 4 iff there exists an even choice for $j$. For $j$ to be even requires that $d$ be even. Conversely, if $d$ is even, we may attempt to choose $j=d$; this is only disallowed when $d=2,$ $p=2$ or $3,$ and $f$ is $2$ mod 4. For an odd prime power, this can only be $S=\PSL_2(3^{4n+2}).$ $H=S\langle\varphi\rangle$ and $H=S\langle\delta\varphi\rangle$ do not contain $\delta,$ and so the index of $S$ within them is precisely $d=4n+2.$ That is, we have exceptions to $\PSL_2(9)$ with the extensions of order $2$ not containing $\delta$; these are $M_{10}$ and $S_6$, which indeed satisfy our square-free condition.
        \item When $q \equiv 3\,(\textrm{mod}\ 8)$, $(q-1)2^a\ell$ is a multiple of 4 iff $d$ is even; when $\delta\in H,$ the codegree $q(q+1)\cdot m/\ell$ is a multiple of 4, while for $\delta\not\in H,$ $q(q+1)/2\cdot m/\ell$ is not a multiple of 4. $(q+1)j$ is always a multiple of 4 while $q(q-1)\cdot d/j$ can be a multiple of $4$ iff $d$ is even and $q(q-1)/2\cdot d/j$ can be a multiple of $4$ iff $d$ is a multiple of $4.$ Not counting exceptions, then, we have the condition $2\nmid d$ for $\delta\in H$ and $4\nmid d$ for $\delta\not\in H.$ Exceptions 2 through 4 above must be considered; since $\delta\not\in S\langle\varphi\rangle,$ we need only consider exception 3 for $\delta\in H,$ which has no effect on the $(q-1)2^a\ell$ degree that matters for $\delta\in H.$ For $\delta\not\in H,$ $\ell=1$ and $j=1$ are disallowed; we only care about $(q+1)j$ here, and when $4\mid d,$ $f$ must be even. Thus no exceptions apply here.
        \item When $q \equiv 5\,(\textrm{mod}\ 8)$, the codegrees corresponding to $(q-1)2^a\ell,$ namely $q(q+1)\cdot m/\ell$ or $q(q+1)/2\cdot m/\ell,$ cannot be multiples of 4. $(q+1)j$ is a multiple of 4 iff $j$ is even; when $\delta\in H,$ the codegree $q(q-1)\cdot d/j$ is always a multiple of 4, while for $\delta\not\in H,$ the codegree $q(q-1)/2\cdot d/j$ is a multiple of 4 iff $d/j$ is even. These correspond to the conditions $2\nmid d$ for $\delta\in H$ and $4=(2\cdot2)\nmid (j\cdot d/j)=d.$ The only exception that could apply here is exception 4, in which $p=5$ and $f$ is odd; since we care only about $j$ even, the $j\neq 1$ restriction does not matter, so no restrictions apply.
        \item When $q \equiv 7\,(\textrm{mod}\ 8)$, $(q-1)2^a\ell$ is a multiple of 4 iff $d$ is even, while the possible codegrees $q(q+1)\cdot m/\ell$ and $q(q+1)/2\cdot m/\ell$ are always multiples of 4. $(q+1)j$ is always a multiple of 4 while $q(q-1)\cdot d/j$ can be a multiple of $4$ iff $d$ is even and $q(q-1)/2\cdot d/j$ can be a multiple of $4$ iff $d$ is a multiple of $4.$ The former conditions are stronger, so the condition we want here is $2\nmid d.$
    \end{enumerate}
    These are all the necessary conditions for our $\gcd$ to be square-free.
\end{proof}

\FloatBarrier

\begin{table}
\setlength\extrarowheight{4pt}
\begin{tabular}{c c|c c|c c}
$H$ & $\chi(1)$ & $H$ & $\chi(1)$ & $H$ & $\chi(1)$ \\
\hline
$M_{11}$ & $2^2\cdot11$ & $He$ & $2^3\cdot5\cdot17$ & $Fi_{23}$ & $2^2\cdot3\cdot13\cdot23$ \\
$M_{12}$ & $2^4$ & $Ru$ & $2^2\cdot3^2\cdot7\cdot13$ & $Co_1$ & $2^2\cdot3\cdot23$ \\
$M_{22}$ & $2^3\cdot5\cdot7$ & $Suz$ & $2^2\cdot7\cdot13$ & $J_4$ & $2^3\cdot3^2\cdot23\cdot29\cdot37$ \\
$J_2$ & $2^2\cdot 3^2$ & $ON$ & $2^6\cdot3^2\cdot19$ & $Fi_{22}$ & $2^5\cdot3\cdot5\cdot7\cdot13$ \\
$M_{23}$ & $2^3\cdot11\cdot23$ & $Co_3$ & $2^7\cdot 7$ & $Fi_{24}$ & $2^3\cdot7^2\cdot11\cdot17\cdot23\cdot29$ \\
$HS$ & $2^7\cdot 7$ & $Co_2$ & $2^3\cdot11\cdot23$ & $B$ & $3^2\cdot5\cdot23\cdot31$ \\
$J_3$ & $2^2\cdot 3^4$ & $HN$ & $2^3\cdot5\cdot19$ & $M$ & $2^2\cdot31\cdot41\cdot59\cdot71$ \\
$M_{24}$ & $2^2\cdot 3^2\cdot 7$ & $Ly$ & $2^4\cdot5\cdot31$ \\
$McL$ & $2^2\cdot3^2\cdot7$ & $Th$ & $2^3\cdot31$ \\
\end{tabular}
\caption{Character degrees for sporadic groups such that $\gcd(\chi(1),\chi^c(1))$ is not square-free}
\label{Sporadic-Groups-Table}
\end{table}

\section{Proof}
\subsection{Theorem A Proof} By Mattarei's Lemma, we may write $G\leq\Aut(H)$, where there exiss (not necessarily distinct) non-abelian simple groups $S_1,...,S_t$ such that $H=\prod_{i=1}^t S_i$. We claim that $t=1.$ Suppose $t>1,$ and consider in particular the characters of $S_1$ and $S_2.$ Note that every non-abelian simple group has a nontrivial irreducible character $\alpha_{\text{odd}}$ with odd degree (see Corollary 12.2 of \cite{Isaacs/book}). Furthermore, every non-abelian simple group except $A_7$ has a character $\chi$ with degree a multiple of 4, by \cite{Lewis}. If at least one of $S_1$ and $S_2$ is not $A_7,$ say $S_1$ without loss of generality, then consider the character $\psi=\chi\times\alpha_\text{odd}\times1\times\cdots,$ where all remaining characters are trivial. Then both the $\psi(1)$ and $\psi^c(1)$ are multiples of 4 (recall that nonabelian simple groups have order divisible by $4$). When $S_1=S_2=A_7,$ let $\tau$ be a character with even degree and codegree, and similarly use the character $\tau\times\tau\times1\times\cdots.$ In both cases, noting that both character degrees and codegrees are multiplicative over groups with trivial center, our $\gcd$ is not square-free. Hence, $t=1$.

Then $H$ is a non-abelian simple group and $G/M$ is almost simple. Since $H\unlhd G/M,$ for any $\chi\in\Irr(G/M)$ and $\psi$ an irreducible component of $\chi_N,$ by Clifford's theorem $\psi(1)\mid \chi(1),$ and by \ref{codegree-divides} $\psi^c(1)\mid\chi^c(1).$ Furthermore, we can construct for any such $\psi$ some $\chi\in\Irr(G)$ such that $\langle\chi_N,\psi\rangle\neq 0,$ from which it follows that $\chi_N$ has $\psi$ as a constituent.
Therefore, if $H$ does not satisfy the square-free condition, $G/M$ does not either. Our approach will then to be to find all $H$ satisfying the square-free condition, then to check all corresponding almost simple groups. 

We first check all possible choices for simple groups $H$:
\begin{itemize}
\item Suppose that $H$ is sporadic and $\neq J_1$. Then Table \ref{Sporadic-Groups-Table} shows that $H$ does not satisfy the condition. Since $|J_1|=2^3\cdot3\cdot5\cdot7\cdot11\cdot19$ is $4$th power-free, $J_1$ must satisfy the square-free condition.
\item Suppose that $H\cong \Alt_n$ for $n\geq 5$. If $n=5,6,7$, then $|\Alt_n|$ is $4$th power-free, and so $\Alt_n$ satisfies the square-free hypothesis. If $n=8$, then Lemma \ref{An-Chars} shows that $\Alt_n$ contains a character of degree $20$, which has codegree $1008$, and thus $\Alt_8$ does not satisfy the square-free hypothesis. If $n>8$ is odd, then Lemma \ref{An-Chars} gives a character of degree $\frac{(n-1)(n-2)(n-3)}{6}$ (which is divisible by $4$), which has codegree $3n\cdot(n-4)!$, which is divisible by $4$. If $n>8$ is even, then Lemma \ref{An-Chars} gives a character of degree $\frac{n(n-2)(n-4)}{3}$ (which is divisible by $4$), which has codegree $\frac{3}{2}(n-1)(n-3)(n-5)!$, which is divisible by $4$. Thus, $\Alt_n$ does not satisfy the square-free condition for $n\geq 8$.
\item Suppose that $H$ is simple of Lie type and not $^2B_2(8)$ or of the form $A_1(q)$. Then Tables \ref{Lie-Groups-Table} and \ref{Lie-Groups-Table2} show that $H$ does not satisfy the condition. $A_1(q),$ on the other hand, has character degrees $1,q-1,q,q+1,$ and possibly $\frac{q+(-1)^{(q-1)/2}}{2}.$ All of these yield a $\gcd$ of 1 or 2, meaning that $A_1(q)$ satisfies our condition. $^2B_2(8)$ also satisfies the condition by manual checking of characters.
\end{itemize}

It now remains only to check the corresponding almost simple groups. The almost simple groups corresponding to $A_1(q)\cong\PSL_2(q)$ are covered in Lemma \ref{PSL-exceptions}. $J_1$ has trivial outer automorphism group and is therefore the only almost simple group of its type. $^2B_2(8)$ has outer automorphism group of order 3, and so the only two almost simple groups corresponding to it are itself and $\Aut(^2B_2(8))$; both can be confirmed to satisfy the square-free condition. $\Alt_5$ and $\Alt_7$ have themselves and $\Sym_5$ and $\Sym_7$; $\Sym_5$ satisfies the square-free condition while $\Sym_7$ does not. Finally, the groups lying between $\Alt_6$ and its automorphism group are $\Alt_6,\Sym_6,M_{10},\PGL_2(9),\Aut(\Alt_6)$; the first four do satisfy the square-free condition while $\Aut(\Alt_6)$ does not. These are all the possible values for $G/M.$ \qed

\FloatBarrier

\begin{table}
\setlength\extrarowheight{8pt}
\begin{tabular}{c|c|c}
$H$ & $\chi(1)$ & Factor of $\gcd$\\
\hline
$A_2(3)$ & $2^2\cdot3$ & 4 \\
$A_2(4)$ & $2^2\cdot5$ & 4 \\
$A_2(q),\;q>4,\;q\equiv0$ mod 4 & $q(q^2+q+1)$ & 4 \\
$A_2(q),\;q>4,\;q\equiv1$ mod 4 & $(q-1)(q^2+q+1)$ & 4 \\
$A_2(q),\;q>4,\;q\equiv3$ mod 4 & $(q+1)(q^2+q+1)$ & 4 \\
$A_3(q)$ & $q^3(q^2+q+1)$ & $q^2$ \\
$A_4(q)$ & $q^6(q+1)(q^2+1)$ & $q^2$ \\
$A_n(q),\; n>4$ & $\dfrac{q^3(q^{n-1}-1)(q^n-1)}{(q-1)(q^2-1)}$ & $q^2$\\
$B_2(q),\; q$ odd & $(q-1)(q^2+1)$ & 4\\
$B_2(q),\; q$ even, $q>2$ & $q(q+1)(q^2+1)$ & 4\\
$B_n(q),\; n>2$ & $\dfrac{(q^{n-2}-1)(q^{n-1}-1)(q^{n-1}+1)(q^n+1)}{2(q^2-1)^2}$ & $q^2$\\
$C_n(q),\; n>2$ & $\dfrac{q^4(q^{2(n-2)}-1)(q^{2n}-1)}{2(q^2-1)^2}$ & $q^2$\\
$D_4(2)$ & $2^2\cdot7$ & $4$\\
$D_4(3)$ & $2^2\cdot5\cdot13$ & $4$\\
$D_4(q),\;q>3$ & $\dfrac{1}{2}q^3(q+1)^4(q^2-q+1)$ & $q^2$\\
$D_5(2)$ & $2^2\cdot5\cdot17$ & $4$\\
$D_5(q),\;q>2$ & $q^2(q^2+1)(q^4+1)$ & $q^2$\\
$D_n(q),\; n>5$ & $\dfrac{q^6(q^{n-4}+1)(q^{2(n-3)}-1)(q^{2(n-1)}-1)(q^n-1)}{(q^2-1)^2(q^4-1)}$ & $q^2$ \\
\end{tabular}
\vspace{10pt}
\caption{Character degrees for groups of Lie type such that $\gcd(\chi(1),\chi^c(1))$ is not square-free.}
\label{Lie-Groups-Table}
\end{table}

\begin{table}
\begin{tabular}{c|c|c}
$H$ & $\chi(1)$ & Factor of $\gcd$\\
\hline
$E_6(q)$ & $q^4\phi_2^3\phi_4^2\phi_6^2\phi_8\phi_{12}$ & $q^2$ \\
$E_7(q)$ & $\frac{1}{2}q^3\phi_1^4\phi_3^2\phi_5\phi_7\phi_9\phi_{14}$ & $q^2$  \\
$E_8(q)$ & $\frac{1}{2}q^4\phi_1^4\phi_3^2\phi_4^2\phi_5^2\phi_7\phi_9\phi_{10}\phi_{12}\phi_{15}\phi_{20}\phi_{30}$& $q^2$ \\
$F_4(q)$ & $q^2\phi_3^2\phi_6^2\phi_{12}$ & $q^2$\\
$G_2(3)$ & $2^6\cdot3^6\cdot7\cdot13$ & $4$ \\
$G_2(4)$ & $2^{12}\cdot3^3\cdot5^2\cdot7\cdot13$ & $4$ \\
$G_2(q),\;q>4,\;q\equiv 1$ mod 2 & $q^2(q^2-q+1)(q^2+q+1)$ & $q^2$\\
$G_2(q),\;q>4,\;q\equiv 2$ mod 6 & $q^3(q-1)(q^2+q+1)$ & $q^2$\\
$G_2(q),\;q>4,\;q\equiv 4$ mod 6 & $q^3(q+1)(q^2-q+1)$ & $q^2$\\
$^2A_2(q),\;q>2,\;q\equiv0$ mod 4 & $q(q^2-q+1)$ & 4 \\
$^2A_2(q),\;q>2,\;q\equiv1$ mod 4 & $(q-1)(q^2-q+1)$ & 4 \\
$^2A_2(q),\;q>2,\;q\equiv3$ mod 4 & $(q+1)(q^2-q+1)$ & 4 \\
$^2A_3(q)$ & $q^2(q^2+1)$ & $q^2$ \\
$^2A_n(q),\; n>3$ & $\dfrac{q^3(q^{n-1}-(-1)^{n-1})(q^n-(-1)^n)}{(q+1)(q^2-1)}$ & $q^2$ \\
$^2B_2(q),\;q=2^{2n+1},\;n>1$ & $(q-1)(q/2)^{1/2}$ & $4$\\
$^2D_4(q)$ & $\frac{1}{2}q^3(q+1)^4(q^2-q+1)$ & $q^2$\\
$^2D_n(q),\; n>5$ & $\dfrac{q^6(q^{n-4}+1)(q^{2(n-3)}-1)(q^{2(n-1)}-1)(q^n-1)}{(q^2-1)^2(q^4-1)}$ & $q^2$ \\
$^3D_4(q^3)$ & $q^7\phi_{12}$ & $q^2$\\
$^2E_6(q^2)$ & $q^6\phi_3^2\phi_6^3\phi_{12}\phi_{18}$ & $q^2$ \\
$^2F_4(2)'$ & $2^2\cdot3\cdot5^2$ & $4$\\
$^2F_4(q),\;q=2^{2n+1},\;n>0$ & $q\phi_{6}\phi_{12}$ & $4$\\
$^2G_2(27)$ & $3^3\cdot19\cdot37$ & $9$\\
$^2G_2(q),\;q=3^{2n+1},\;n>1$ & $(q/3)^{1/2}\phi_1\phi_2\phi_4$ & $9$\\
\end{tabular}
\vspace{2pt}
\caption{Character degrees for groups of Lie type continued.}
\label{Lie-Groups-Table2}
\end{table}

\FloatBarrier

\section*{Acknowledgements}
This research was conducted by Karam Aldahleh, Alan Kappler, and Neil Makur during the Summer of 2025 under the supervision of Yong Yang. The work was supported by the NSF under grants DMS-2150205 and DMS-2447229.

The authors gratefully acknowledge the financial support of NSF and thank Texas State University for providing a great working environment and support. Yang was also partially supported a grant from the Simons Foundation (\#918096).

\section*{Disclosure Statement}
The authors declare that there is no potential conflict of interest that could influence this paper. The authors declare that there is no potential competing interest that could influence this paper.

\section*{Data Availability Statement} Data sharing is not applicable to this article, as no data sets were generated or analysed during the current study.


\end{document}